\newtheorem{thm}{Theorem}[section]
\newtheorem{lem}{Lemma}[section]
\newtheorem{rem}{Remark}[section]
\newtheorem{ex}{Example}[section]
\numberwithin{equation}{section}
\begin{document}
\title{OPTIMAL CONTROL OF MARKOV PROCESSES WITH\\ AGE-DEPENDENT TRANSITION RATES
\thanks{This work is supported in part by SPM fellowship of
CSIR and in part by UGC Centre for Advanced Study.}}

\author{Mrinal K. Ghosh\thanks{
Department of Mathematics, Indian Institute of Science,
Bangalore-12, India, email: mkg@math.iisc.ernet.in} ,  Subhamay
Saha\thanks{ Department of Mathematics, Indian Institute of
Science, Bangalore-12, India, email: subhamay@math.iisc.ernet.in}}
\date{}

\maketitle \baselineskip20pt
\parskip10pt
\parindent.4in
\begin{abstract}
\noindent We study optimal control of Markov processes with
age-dependent transition rates. The control policy is chosen
continuously over time based on the state of the process and its
age. We study infinite horizon discounted cost and infinite horizon average cost problems.
Our approach is via the construction of an equivalent semi-Markov decision process. We
characterise the value function and optimal controls for both
discounted and average cost cases.
\end{abstract}

\noindent {\bf Key Words} : Age-dependent transition rates, semi-Markov decision process,
infinite horizon discounted cost, infinite horizon average cost.

\noindent {\bf Mathematics Subject Classification}: 93E20, 60J75.
\section{Introduction}
We address optimal control of Markov processes in continuous time
taking values in a countable state space. The simplest example of
such a process is controlled Markov chains also known as
continuous time Markov decision process (CTMDP). The study of
controlled Markov chains is quite well developed \cite{yush},
\cite{guo}, \cite{hernandez}, \cite{pliska}; in particular see
\cite{hgbook} and the references therein. For a continuous time
controlled Markov chain, for each control input the holding time
or sojourn time in each state is exponentially distributed. Thus
for a fixed input the sojourn times are memoryless. If the sojourn
time in each state is given by a general distribution (other than
exponential) then the process is referred to as a semi-Markov
process. A controlled semi-Markov process, also known as
semi-Markov decision process(SMDP), is determined by a controlled
transition kernel and controlled holding time distributions. This
class of processes are usually studied via the embedded controlled
Markov chain \cite{fed1}, \cite{fed2}, \cite{wakuta}. Since in an
SMDP the holding time distributions have a memory, the age of the
process in a particular state influences the residual time in that
state. It may, however, be noted that the age has no influence in
determining the next state; nor does it play any role in the
decision making. There are several situations in which the age of
the process is crucial in the overall decision making process. To
illustrate this point we consider two examples.

 Consider a queueing system with controllable arrival and service rates.
 Suppose the queue capacity is infinite. The decision maker can dynamically
 select the service rates between the bounds $0< \mu_1 < \mu_2 < \infty$ depending
  on the number of persons in the queue and for how long that many persons are in the queue.
   Moreover, the arrival rates can also be adjusted between $0 < \gamma_1 < \gamma_2 < \infty.$
   The cost structure consists of three parts: a holding cost rate function $b(i,y)$ where $i$ is
   the number of customers and $y$ is the amount of time for which there has been $i$ customers,
   an income rate $b_1(\gamma)$ when an arrival rate $\gamma$ is maintained and a service cost
    rate $b_2(\mu)$ when the service rate is $\mu$. Mathematically the model can be described as below:
\begin{eqnarray*}S=\{0,1,2,\cdots\}:\; \mbox{state space}.\end{eqnarray*}
\begin{eqnarray*}U=[\gamma_1,\gamma_2]\times [\mu_1,\mu_2]: \; \mbox{control set}.\end{eqnarray*}
\begin{eqnarray*}\lambda_{ij}(y,\gamma, \mu)=\begin{cases}\gamma \; \mbox{for}\; j=i+1 \\
\mu \; \mbox{for}\; j=i-1\\
0 \; \mbox{otherwise}\end{cases}: \, \mbox{transition rates}.
\end{eqnarray*}
\begin{eqnarray*}c(i,y,\gamma, \mu)=b(i,y)-b_1(\gamma)+b_2(\mu):\; \mbox{cost function}.
\end{eqnarray*}

Next consider a device which is subject to shocks that occur
randomly in time according to a Poisson process with controllable
rate. Every shock causes a damage to the machine. The damage
caused depends on the state of the machine and the amount of time
it has been in that state. The machine can be in the states
$0,1,2, \cdots ,N.$ The state $0$ represents the new machine and
once the machine goes to state $N$, then a further shock would
mean that a new machine has to be installed. Suppose the rate of
arrival of shocks can be adjusted between  $0<
\mu_1<\mu_2<\infty.$ The cost structure consists of two parts: an
operational cost rate $b(i,y)$ is incurred if the machine is in
state $i$ and the age in that state is $y$, and a maintenance rate
$b_1(\mu)$ when the shock arrival rate is $\mu$. Mathematically,
the model can be described as below:
\begin{eqnarray*}S=\{0,1,\cdots,N\}\,.\end{eqnarray*}
\begin{eqnarray*}U=[\mu_1,\mu_2]\,.\end{eqnarray*}
\begin{eqnarray*}\lambda_{ij}(y,\mu)=\begin{cases}\frac{\mu}{1+y}\; \mbox{for}\; j=i+1,i\leq N-2\\
\frac{\mu y}{1+y} \; \mbox{for}\; j=i+2,i\leq N-2\\
\mu \; \mbox{for}\; i=N-1,j=N \; \mbox{and}\; i=N,j=0\;.\end{cases}
\end{eqnarray*}
\begin{eqnarray*}c(i,y,\mu)=b(i,y)+b_1(\mu)\;.
\end{eqnarray*}
Motivated by the above two examples we study optimal control of
Markov processes where the transition rates are age dependent.
Informally, this means if the process is in state $i$ and its age
in the state is $y$, then the probability that in an infinitesimal
time $dt$ the process will jump to state $j$ is
$\lambda_{ij}(y)dt$ plus a small error term. The probability that
after an infinitesimal time $dt$ it will still be in state $i$ is
$1- \displaystyle\sum_{j\neq i}\lambda_{ij}(y)dt$ plus some error
term, where $\lambda_{ij}$ are some measurable functions referred
to as transition rates. In controlled case the transition rates
also depends on the control parameter chosen dynamically based on
the state and the age. In continuous time  Markov chain the
transition rates are constant with respect to the age. In
semi-Markov case the transition rates are given by
$\lambda_{ij}(y)=p_{ij}\frac{f(y|i)}{1-F(y|i)}$, where $p_{ij}$s
are the transition probabilities and $F$ is the holding time
distributions with density $f$. In CTMDP and SMDP when the
controller is using a stationary control, he or she takes decision
only on the basis of state and it is independent of the age. But
in our case the decision maker takes his actions based on both the
state and the age. Thus the decision maker, unlike in CTMDP and
SMDP, has the liberty to take actions between jumps even when he
or she is using a stationary control. This liberty can be of great
advantage in practical situations. Hence our model may be more
effective in many practical situations.

We now present a formal description of the controlled process. A
rigorous construction of the process is given in the next section.
Let $S=\{0,1,2,\cdots \}$ be the state space and $U$ a compact
metric space, which is the control set.

For $i,j \in S$ with $i \neq j$ suppose $$\lambda_{ij}:
[0,\infty)\times U\rightarrow [0, \infty)$$ are given measurable
functions. Consider a controlled process $\{(X_t,Y_t)\}$ which
satisfies
\begin{eqnarray}\label{rate}\begin{cases}\mathbb{P}(X_{t+h}=j,Y_{t+h}=0 \,|\, X_t=i, Y_t=y,
 U_t = u )= \lambda_{ij}(y,u)h + o(h)
\\ \mathbb{P}(X_{t+h}=j,Y_{t+h}=y+h \,|\, X_t=i, Y_t=y,
U_t = u )= 1-\sum_{j\neq i}\lambda_{ij}(y,u)h + o(h)\,.
\end{cases}
\end{eqnarray}We call $\{X_t\}$ the state process, $\{Y_t\}$ the associated age
process and $\{U_t\}$ is the control process which is a $U$-valued
process satisfying certain technical conditions. The control
process is chosen based on both the state and its age. Thus the
control action is  taken continuously over time. Equation
\eqref{rate} implies that at time $t$ if the state is $i$, and its
age in the state is $y$ and the control chosen is $u$ then
$\lambda_{ij}(y,u)$ is the the infinitesimal jump rate to state
$j$.

The main aim in a stochastic optimal control problem is to find a
control policy which minimises a given cost functional. Let
$$c: S \times \mathbb{R}_+ \times U
\longrightarrow \mathbb{R}_+$$ be the running cost function.
Suppose the planning horizon is infinite and consider the
discounted cost problem. We seek to minimise
$$\mathbb{E}\int_0^{\infty}e^{-\alpha t}
c(X_t,Y_t,U_t)dt$$ over the set of all admissible controls (to be
defined in the next section), where $\alpha > 0$ is the discount
factor. We also study the long-run average cost on the infinite
horizon.

We now briefly comment on some earlier work leading to ours. Hordijk et al.
\cite{hord1,hord2,hord3} have studied Markov drift decision
processes which is an important  generalisation of semi-Markov decision processes. However,
in their work though the state drifts according to a specified drift function between jumps,
no action is taken during the period. There is another important class of controlled processes
 namely piecewise deterministic processes(PDP) \cite{davis}, where decisions are taken between jumps as well.
 But in PDP the importance of age has not been emphasized.

 \noindent The rest of the paper is structured as follows. In
Section $2$ we use the idea in \cite{anind} to give a rigorous
construction of the process $\{(X_t, Y_t)\}$ which is based on  a
representation of $\{(X_t, Y_t)\}$ as stochastic integrals with
respect to an appropriate Poisson random measure. In Section $3$
we study the infinite horizon discounted cost problem. For that we
construct an equivalent semi-Markov process. Section $4$ deals
with the infinite horizon average cost case.

\section{Mathematical Model and Preliminaries}
Let $(\Omega, \mathcal{F}, \mathbb{P})$ be the underlying
probability space. For $i,j \in S,i \neq j$, let
$$\lambda_{ij}: [0,\infty) \times U \rightarrow
[0,\infty)$$ be given measurable functions. Set
$$\lambda_{ii}(y,u)= - \sum_{j \neq i}\lambda_{ij}(y,u) \;.$$
We make the following assumption which is in force throughout this
paper:

\noindent \textbf{(A1)}$\,\,\,\,$ There exists a constant $M$ such that $$\displaystyle \sup_{i \in
S,y\geq 0,u \in U}\{-\lambda_{ii}(y,u)\} < M \;.$$

\noindent \textbf{(A2)}$\,\,\,\,$ $\displaystyle \inf_{i \in S,y\geq 0,u \in U}\{-\lambda_{ii}(y,u)\} > m $ for some $m>0$.

For technical reasons we introduce  relaxed control framework.
 Let $\mathcal{P}(U)$ denote the set of
probability measures on $U$.  For $i \neq j$, let
$\tilde{\lambda}_{ij}:[0,\infty)\times \mathcal{P}(U) \rightarrow
\mathbb{R}_+$ be defined by
$$\tilde{\lambda}_{ij}(y,\nu)=\int_U \lambda_{ij}(y,u)\nu (du)\,.$$

\noindent For $i\neq j$, $y \in \mathbb{R}_+$ and $\nu \in
\mathcal{P}(U)$, let $\Lambda_{ij}(y,\nu)$ be consecutive right open, left closed
intervals of the real line of length $\tilde{\lambda}_{ij}(y,\nu)$.

\noindent We define a function $h: S \times \mathbb{R}_+ \times
\mathcal{P}(U) \times \mathbb{R} \rightarrow \mathbb{R}$ by
\begin{eqnarray}\label{h}
h(i,y,\nu,z)=\begin{cases}
j-i~ &\mbox{if} ~ z \in \Lambda_{ij}(y,\nu)\\
0 &\text{otherwise}\,.
\end{cases}
\end{eqnarray}
We also define a function $g: S \times \mathbb{R}_+ \times
\mathcal{P}(U) \times \mathbb{R} \rightarrow \mathbb{R}$ by
\begin{eqnarray}\label{g}
g(i,y,\nu,z)=\begin{cases}
y ~ &\mbox{if}~ z \in \displaystyle \bigcup_j \Lambda_{ij}(y,\nu)\\
0 &\text{otherwise}\,.
\end{cases}
\end{eqnarray}
Let $\wp(ds,dz)$ be a Poisson random measure on $\mathbb{R}_+
\times \mathbb{R} $ with intensity measure $ds\times dz$, the
product Lebesgue measure on $\mathbb{R}_+ \times \mathbb{R}.$

\noindent Consider the following stochastic differential equation
\begin{eqnarray}\label{proc} \begin{cases}X_t = X_0 + \int_0^t \int_ {\mathbb{R}}h_1(X_{s-}, Y_{s-},U_s, z)\wp(ds,dz)
\\ Y_t = Y_0 + t- \int_0^t \int_ {\mathbb{R}} h_2(X_{s-},Y_{s-},U_s,z) \wp(ds,dz)
\end{cases}
\end{eqnarray}
where $\{U_t\}$ is a $\mathcal{P}(U)$-valued process with
measurable sample paths which is predictable with respect to the filtration given by
$$\sigma(\wp(A \times B):A \in \mathcal{B}([0,s]), B \in \mathcal{B}(\mathbb{R}),s\leq t)$$ and
$X_0$, $Y_0$ are random variables with prescribed laws independent
of the Poisson random measure. The integrals in $\eqref{proc}$ are
over $(0,t]$. From the results in \cite[Chap IV, p.
231]{ikeda1989stochastic} it follows that for each $\{U_t\}$ as
above, equation $\eqref{proc}$ has an a.s unique strong solution
$\{(X_t,Y_t)\}$.
 If $U_t= \textbf{u}(t,X_{t-}, Y_{t-})$ for some measurable function $\textbf{u}:[0,\infty)\times S \times [0,\infty)\rightarrow \mathcal{P}(U)$
 then $U$ is called a Markov control. Moreover if $U_t= \textbf{u}(X_{t-}, Y_{t-})$
 for some measurable function $\textbf{u}:S \times [0,\infty) \rightarrow \mathcal{P}(U)$ then $U$ is referred to as stationary Markov control.
 It is customary in optimal control literature to refer to the function $\textbf{u}$ as the control.
 We denote by $\mathcal{U}$ the set of all measurable functions $\textbf{u}:S \times [0,\infty)\rightarrow \mathcal{P}(U)$.
In this paper we restrict our set of controls to the set
$\mathcal{U}$ and we refer to $\mathcal{U}$ as the set of
admissible controls. For each $\textbf{u} \in \mathcal{U}$,
$\{(X_t,Y_t)\}$ is a strong Markov process. Let $f: S \times
\mathbb{R}_+ \rightarrow \mathbb{R}$ be continuously
differentiable in the second variable. Then applying It\^{o}'s
formula to $f$ we can show that the generator of the process
$\{(X_t, Y_t)\}$ denoted by $\mathcal{A}^{\textbf{u}}$ is given by
\begin{eqnarray}\mathcal{A}^{\textbf{u}}f(i,y)= \frac{\partial f}{\partial y}(i,y)+
 \sum_{j \neq i}\tilde{\lambda}_{ij}(t,y,\textbf{u}(i,y))[f(j,0)-f(i,y)] \;.
\end{eqnarray}
\section{Infinite Horizon Discounted case}
Let
$$c: S \times \mathbb{R}_+ \times U
\longrightarrow \mathbb{R}_+$$ be the running cost function.
Define $\tilde{c}: S \times \mathbb{R}_+ \times \mathcal{P}(U)
\longrightarrow \mathbb{R}_+$ by $$ \tilde{c}(i, y, \nu) = \int_U
c(i, y, u) \nu (du).$$
Let $\alpha
> 0$ be the discount factor. Then for $\textbf{u} \in \mathcal{U}$
the infinite horizon discounted cost is given by
\begin{eqnarray}J^{\textbf{u}}_{\alpha}(i)= E_{i,0}^{\textbf{u}}\int_0^{\infty}e^{-\alpha t}\tilde{c}(X_t,Y_t,\textbf{u}(X_t,Y_t))dt
\end{eqnarray}
where $E^{\textbf{u}}_{i,0}$ denotes the expectation when the
control $\textbf{u}$ is used and $X_0 = i, Y_0 = 0$. The objective
is to minimise $J^{\textbf{u}}_{\alpha}(i)$ over all admissible
controls. So we define
\begin{eqnarray}V_{\alpha}(i)= \inf_{\textbf{u} \in \mathcal{U}}J^{\textbf{u}}_{\alpha}(i)\,.
\end{eqnarray}
The function $V_{\alpha}$ is called the ($\alpha$-discounted)
value function. An admissible control $\textbf{u}^* \in
\mathcal{U}$ is called ($\alpha$-discounted) optimal  if
$$ J^{\textbf{u}^*}_{\alpha}(i) = V_{\alpha}(i).$$
 We carry out our study under the following assumptions :

\noindent \textbf{(A3)}$\,\,\,\,$ $\lambda_{ij}$s $(j \neq i)$ are jointly continuous in $y$ and $u$ and
the sum $\displaystyle \sum_{j \neq i}\lambda_{ij}(y,u)$ converges uniformly for each $i$.

\noindent \textbf{(A4)}$\,\,\,\,$ The cost function $c$ is continuous in the second and third variable
and there exists a finite constant $\tilde{C}$ such that
$$\sup_{i,y,u}c(i,y,u) \leq \tilde{C}\,.$$

\noindent The boundedness of $c$ implies that $V_{\alpha}(i)$ is
well defined for each $i$ and
\begin{eqnarray*}\sup_{i}V_{\alpha}(i)\leq \frac{\tilde{C}}{\alpha}\;.
\end{eqnarray*} In order to characterise the value function and the optimal control we construct an equivalent semi-Markov decision process.
In order to do so the key observation here is that between jumps the trajectory of the
process $\{(X_t,Y_t)\}$ is deterministic. Thus $\{(X_t,Y_t)\}$ is
a piecewise deterministic process \cite{davis}. Therefore a
stationary relaxed control is equivalent to that of choosing a
function $r:[0,\infty) \longrightarrow \mathcal{P}(U)$ at each
jump time. More explicitly suppose the process jumps to a state
$(i,0)$, then we choose the function $r_i$ given by
$r_i(y)=\textbf{u}(i,y)$.

\noindent Let
$$\mathcal{R}=\{r\,|\,r:[0,\infty) \longrightarrow \mathcal{P}(U), \,measurable\}$$
This set $\mathcal{R}$ will be the action space for an equivalent
semi-Markov decision process that we are going to construct. First
we give a topology on $\mathcal{R}$. Let $V=L^1([0,\infty);C(U))$,
where $C(U)$ is space of continuous functions on $U$ endowed with
the supremum norm.. Thus $V$ is the space of integrable (with
respect to Lebesgue measure) $C(U)$-valued functions on
$[0,\infty)$. Then the dual of $V$ is
$V^*=L^{\infty}([0,\infty);M(U))$, where $M(U)$ is the space of
complex Borel regular measures on $U$ with the total variation
norm. Now by Banach-Alaoglu theorem the unit ball of $V^*$ is
weak$^*$ compact. Hence $\mathcal{R}$ being a closed subset of the
unit ball of $V^*$, is a compact metric space (for more details
see \cite[Chap 4, p. 149]{davis}).  In this topology, $r^n
\longrightarrow r$ if and only if
$$\int_{[0,\infty)}\int_{U}f(y,u)r^n_y(du)dy \longrightarrow \int_{[0,\infty)}\int_{U}f(y,u)r_y(du)dy$$for all $f\in V$.

\noindent Now define $f:S\times \mathcal{R}\longrightarrow \mathbb{R}_+$ by
\begin{eqnarray}\label{expcost}f(i,r)= \int_0^{\infty}\biggl(\exp(-\alpha y)\exp\{- \int_0^y\int_U\sum_{k \neq i}
\lambda_{ik}(s,u)r_s(du)ds\}\int_Uc(i,y,u)r_y(du)\biggr)dy\,.
\end{eqnarray}
For $r \in \mathcal{R}$ define a transition matrix by
\begin{eqnarray}\label{tranprob}\hat{p}_{ij}(r)=\int_0^{\infty}\biggl(\exp\{- \int_0^y\int_U
\sum_{k \neq i}\lambda_{ik}(s,u)r_s(du)ds\}\int_U\lambda_{ij}(y,u)r_y(du)\biggr)dy\,.
\end{eqnarray}
Finally for $r \in \mathcal{R}$ and $ t \in \mathbb{R}_+$ define a family of distribution functions by
\begin{eqnarray}\label{sojdist}F^r_{ij}(t)=\frac{\int_0^{t}\biggl(\exp\{- \int_0^y\int_U\displaystyle
\sum_{k \neq i}\lambda_{ik}(s,u)r_s(du)ds\}\int_U\lambda_{ij}(y,u)r_y(du)\biggr)dy}{\hat{p}_{ij}(r)}\,.
\end{eqnarray}
Now consider a semi-Markov decision process with state space $S$,
action space $\mathcal{R}$, expected one stage cost $f$ given by
$\eqref{expcost}$, transition probabilities $(\hat{p}_{ij}(r))$
given by $\eqref{tranprob}$ and sojourn time distributions
$F_{ij}^r$ given by \eqref{sojdist}. In short the dynamics of the
process is as follows: Suppose the initial state is $i \in S$ and
the decision maker chooses an action $r$ from the set
$\mathcal{R}$. The action depends on the state. Because of this
action the decision maker has to pay a cost up to the next jump
time at a rate dependent on the state and the action chosen. The
next state is $j$ with probability $\hat{p}_{ij}(r)$ and
conditioned on the event that the next state is $j$, the
distribution of the sojourn time in the state $i$ is given by
$F^r_{ij}$. The aim of the decision maker is to minimize the cost
over the set of stationary policies $\pi:S\longrightarrow
\mathcal{R}$.

\noindent Define
\begin{eqnarray}\widetilde{J}_{\alpha}^{\pi}(i)=\mathbb{E}^{\pi}_{i}\displaystyle
 \sum_{n=0}^{\infty}e^{-\alpha(\tau_0+\tau_1+\cdots+\tau_n)}\int_0^{\tau_{n+1}}e^{-\alpha y}\biggl(\int_U c(X_{T_n},y,u)\pi_{X_{T_n}}(y)(du)\biggr)dy
\end{eqnarray}
where $T_n$ is the $n$th jump time and $\tau_n=T_n-T_{n-1}$. Let
 $$\tilde{V}_{\alpha}(i)=\displaystyle \inf_{\pi}\widetilde{J}_{\alpha}^{\pi}(i).$$ Thus $\tilde{V}_{\alpha}$ is the value function for the SMDP.
\noindent Now corresponding to a control $\textbf{u}$ of the original optimal
control problem, define the policy $\pi^{\textbf{u}}$ for the semi-Markov decision process by
$$\pi^{\textbf{u}}_i(y)=\textbf{u}(i,y).$$ Then it follows from the definition of the semi-Markov
decision process that
\begin{align*}J^{\textbf{u}}_{\alpha}(i) &= \mathbb{E}^{\textbf{u}}_{i,0}\bigl[\displaystyle\sum_{n=0}^{\infty}
\int_{T_n}^{T_{n+1}}e^{-\alpha t}\tilde{c}(X_t,Y_t,\textbf{u}(X_t,Y_t))dt\bigr]\\
&= \sum_{n=0}^{\infty}\mathbb{E}^{\textbf{u}}_{i,0} \bigl[\mathbb{E}^{\textbf{u}}_{i,0}\bigl[ \int_{T_n}^{T_{n+1}}e^{-\alpha t}\tilde{c}(X_t,Y_t,\textbf{u}(X_t,Y_t))dt|H_n\bigr]\bigr]\\
&=\mathbb{E}^{\pi^{\textbf{u}}}_{i}\displaystyle
 \sum_{n=0}^{\infty}e^{-\alpha(\tau_0+\tau_1+\cdots+\tau_n)}\int_0^{\tau_{n+1}}e^{-\alpha y}\biggl(\int_U c(X_{T_n},y,u)\pi_{X_{T_n}}^{\textbf{u}}(y)(du)\biggr)dy\\
 &=\widetilde{J}_{\alpha}^{\pi^{\textbf{u}}}(i)
\end{align*} where $H_n$ is the history upto the nth jump time.
On the other hand corresponding to a policy $\pi$ of
the SMDP define the control $\textbf{u}^{\pi}$ for the original optimal control problem by
$$\textbf{u}^{\pi}(i,y)=\pi_i(y).$$ Again
$$J^{\textbf{u}^{\pi}}_{\alpha}(i) = \tilde{J}_{\alpha}^{\pi}(i).$$
Hence it follows that
 \begin{eqnarray}\label{equi}V_{\alpha}(i) = \widetilde{V}_{\alpha}(i).\end{eqnarray}
 The equation $\eqref{equi}$ establishes the equivalence between the original control problem and the constructed semi-Markov
 decision process.

 Thus in order to evaluate $V_{\alpha}(i)$,
 we analyse the the equivalent semi-Markov decision process. As a first step we state the following useful lemma.
\begin{lem} Under (A1) - (A4), the functions $f(i,.)$, $\hat{p}_{ij}(.)$ and $F_{ij}^{(.)}(t_0)$ are continuous on $\mathcal{R}$.
\end{lem}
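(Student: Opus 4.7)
The plan is to exploit the weak-$*$ topology on $\mathcal{R}$: $r^n \to r$ means
$$\int_0^\infty \int_U g(y,u)\, r^n_y(du)\,dy \longrightarrow \int_0^\infty \int_U g(y,u)\, r_y(du)\,dy$$
for every $g \in V = L^1([0,\infty); C(U))$. I will identify the integrands appearing in $f(i,\cdot)$, $\hat{p}_{ij}(\cdot)$, and the numerator of $F_{ij}^{(\cdot)}(t_0)$ as $V$-test functions once the exponential factor is frozen at its limiting value, and handle the remaining $r^n$-dependence inside the exponential by a separate pointwise-plus-dominated convergence argument.

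As the first step I show pointwise convergence of the exponential factor. Fix $i \in S$ and $y > 0$, and write $\Lambda^r_i(y) := \int_U \sum_{k \neq i}\lambda_{ik}(y,u)\, r_y(du)$. By (A3) the inner sum is jointly continuous on $[0,\infty) \times U$, and by (A1) it is bounded by $M$, so $(s,u)\mapsto \mathbf{1}_{[0,y]}(s)\sum_{k\neq i}\lambda_{ik}(s,u)$ belongs to $V$. Applying weak-$*$ convergence to this test function gives $\int_0^y \Lambda^{r^n}_i(s)\,ds \to \int_0^y \Lambda^r_i(s)\,ds$, and hence $E^n(y) := \exp\{-\int_0^y \Lambda^{r^n}_i(s)\,ds\} \to E(y)$ pointwise. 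Moreover, (A2) provides the uniform envelope $E^n(y),\, E(y) \le e^{-my}$.

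For $\hat{p}_{ij}$ I then split
\begin{align*}
\hat{p}_{ij}(r^n) - \hat{p}_{ij}(r)
 &= \int_0^\infty (E^n(y) - E(y)) \int_U \lambda_{ij}(y,u)\, r^n_y(du)\,dy\\
 &\quad + \int_0^\infty E(y)\Bigl[\int_U \lambda_{ij}(y,u)\, r^n_y(du) - \int_U \lambda_{ij}(y,u)\, r_y(du)\Bigr]dy.
\end{align*}
The second term vanishes because $(y,u)\mapsto E(y)\lambda_{ij}(y,u)$ is continuous in $u$ (by (A3)), has sup-norm over $U$ dominated by $Me^{-my}$, and hence lies in $V$. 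The first term is bounded pointwise by the integrable envelope $2Me^{-my}$ and tends to $0$ pointwise by the previous step, so dominated convergence disposes of it. Continuity of $f(i,\cdot)$ follows by the same splitting, with $c(i,y,u)$ (continuous in $u$ and bounded by $\tilde{C}$ by (A4)) replacing $\lambda_{ij}$ and an extra $e^{-\alpha y}$ factor that only tightens the envelope. For $F_{ij}^{(\cdot)}(t_0)$ the same argument handles the numerator with $\int_0^{t_0}$ in place of $\int_0^\infty$, and division by the continuous quantity $\hat{p}_{ij}(r)$ (where positive) then yields continuity of the ratio.

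The main obstacle is the joint dependence of each integrand on $r^n$ through both the exponential factor and the outer $\int_U \cdots r^n_y(du)$ integration; weak-$*$ convergence of $r^n$ does not by itself give convergence of such a product. The triangle-inequality splitting decouples these two roles, and the strictly positive lower bound in (A2) is what furnishes the integrable envelope $e^{-my}$ needed to make the dominated-convergence step work uniformly in $n$.
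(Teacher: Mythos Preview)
Your proof is correct and follows essentially the same triangle-inequality splitting as the paper's proof, separating the $r^n$-dependence in the exponential factor from that in the outer $\int_U\cdots r^n_y(du)$ integration. You are in fact more careful than the paper: the paper simply asserts that both terms vanish ``by the definition of convergence in $\mathcal{R}$,'' whereas you explicitly verify that the indicator-truncated test function lies in $V$ (giving pointwise convergence of the exponential factor), identify the integrable envelope $e^{-my}$ supplied by (A2), and invoke dominated convergence for the first term---details the paper leaves implicit.
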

\begin{proof}Suppose $r^n$ converges to $r$ in $\mathcal{R}$. Then
\begin{align*}|f(i,r_n)-f(i,r)| &\leq \tilde{C}\int_0^{\infty}e^{-\alpha t}\bigl|
e^{- \int_0^t\int_U\sum_{k \neq i}\lambda_{ik}(s,u)r^n_s(du)ds}-e^{- \int_0^t\int_U\sum_{k \neq i}\lambda_{ik}(s,u)r_s(du)ds}\bigr|dt\\
&+\biggl|\int_0^{\infty}\int_Ue^{-\alpha t}e^{- \int_0^t\int_U\sum_{k \neq i}\lambda_{ik}(s,u)r_s(du)ds}c(i,t,u)r_t^n(du)dt-\\
&\quad \int_0^{\infty}\int_Ue^{-\alpha t}e^{- \int_0^t\int_U\sum_{k \neq i}\lambda_{ik}(s,u)r_s(du)ds}c(i,t,u)r_t(du)dt\biggr|\,.
\end{align*}By the definition of convergence in $\mathcal{R}$, both the terms on the right hand side of the above expression go to $0$ as $n\rightarrow \infty$.
Similar arguments hold for the other two functions as well.
\end{proof}
Thus using the equivalence of the semi-Markov decision process
described above and the original control problem, we obtain the
following result from the standard theory of SMDP
\cite{ross1992applied}.
\begin{thm}Assume (A1) - (A4). Then the value function $V_{\alpha}$ is the unique bounded solution of
\begin{eqnarray}\label{esm}\phi(i)=\min_{r \in \mathcal{R}}\bigl[f(i,r)+\sum_{j \neq i}
\hat{p}_{ij}(r)\int_0^{\infty}e^{-\alpha t}\phi(j)dF_{ij}^{r}(t)\bigr]
\end{eqnarray}Furthermore if $r^*_{i}$ is the minimizer of the right hand
side of $\eqref{esm}$ (which exists by the previous lemma and
compactness of $\mathcal{R}$), then the control given by
$\textbf{u}^*(i,y)=r_i^*(y)$ is an optimal control for the
original control problem.
\end{thm}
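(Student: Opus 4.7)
The plan is to exploit the equivalence $V_\alpha(i) = \widetilde{V}_\alpha(i)$ established in \eqref{equi} and reduce the claim to standard discounted-cost dynamic programming for the SMDP with state space $S$, action space $\mathcal{R}$, one-stage cost $f$, transition matrix $\hat{p}_{ij}(r)$ and sojourn distributions $F_{ij}^r$. The first step is to derive the Bellman equation for $\widetilde{V}_\alpha$: conditioning on the first jump and using the Markov property of the embedded chain, for every stationary policy $\pi$ one gets
\[
\widetilde{J}_\alpha^\pi(i) \;=\; f(i,\pi_i) \;+\; \sum_{j \neq i}\hat{p}_{ij}(\pi_i)\int_0^\infty e^{-\alpha t}\widetilde{J}_\alpha^\pi(j)\,dF_{ij}^{\pi_i}(t),
\]
with $f(i,\pi_i)$ recognised via \eqref{expcost} as the expected discounted running cost accrued up to the first jump. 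Taking the infimum over $\pi$ then formally yields \eqref{esm} with $\phi = \widetilde{V}_\alpha$.

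Next I would prove uniqueness in $\ell^\infty(S)$ by showing the operator $T$ associated with the right-hand side of \eqref{esm} is a contraction. Writing $\tilde{g}_i^r(y) := \int_U \sum_{k \neq i}\lambda_{ik}(y,u)r_y(du)$, the definitions \eqref{tranprob} and \eqref{sojdist} give $\sum_{j \neq i}\hat{p}_{ij}(r)F_{ij}^r(t) = 1 - e^{-\int_0^t \tilde{g}_i^r(s)ds}$, and an integration by parts shows
\[
\sum_{j \neq i}\hat{p}_{ij}(r)\int_0^\infty e^{-\alpha t}\,dF_{ij}^r(t) \;=\; 1 - \alpha\int_0^\infty e^{-\alpha t}\,e^{-\int_0^t \tilde{g}_i^r(s)ds}\,dt.
\]
By (A1), $\tilde{g}_i^r(s) \leq M$, so the inner integral is at least $1/(\alpha+M)$, and the expression above is bounded by $M/(\alpha+M) < 1$ uniformly in $i$ and $r$. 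Hence $T$ is a contraction on $\ell^\infty(S)$; since (A4) gives $\|\widetilde{V}_\alpha\|_\infty \leq \widetilde{C}/\alpha$ and $\widetilde{V}_\alpha$ solves \eqref{esm}, it is the unique bounded fixed point of $T$, and $V_\alpha = \widetilde{V}_\alpha$ is the unique bounded solution.

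For existence of the minimiser and optimality of $\textbf{u}^*$, I would combine the preceding continuity lemma with dominated convergence to verify that $r \mapsto f(i,r) + \sum_{j \neq i}\hat{p}_{ij}(r)\int_0^\infty e^{-\alpha t}V_\alpha(j)\,dF_{ij}^{r}(t)$ is continuous on the compact metric space $\mathcal{R}$, producing a minimiser $r_i^*$ for every $i$. A standard verification argument for discounted SMDPs (cf.\ \cite{ross1992applied}) then shows the stationary policy $\pi^*(i) := r_i^*$ is optimal, i.e.\ $\widetilde{J}_\alpha^{\pi^*} = \widetilde{V}_\alpha$. The correspondence between admissible controls and stationary SMDP policies established just before \eqref{equi} finally yields $J_\alpha^{\textbf{u}^*}(i) = \widetilde{J}_\alpha^{\pi^*}(i) = V_\alpha(i)$, so $\textbf{u}^*$ is optimal for the original problem.

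The hard part will be the continuity of $r \mapsto \int_0^\infty e^{-\alpha t}V_\alpha(j)\,dF_{ij}^{r}(t)$ on $\mathcal{R}$, since the preceding lemma only supplies pointwise continuity of $F_{ij}^{(\cdot)}(t_0)$. I would handle this by working with the unnormalised measure $\hat{p}_{ij}(r)\,dF_{ij}^r(t) = e^{-\int_0^t \tilde{g}_i^r(s)ds}\int_U \lambda_{ij}(t,u)r_t(du)\,dt$ (so the potentially small factor $\hat{p}_{ij}(r)$ does not enter the limit), bounding the resulting density uniformly in $r$ by $M e^{-\alpha t}\|V_\alpha\|_\infty$ thanks to (A1) and the exponential discount, and then invoking dominated convergence together with the weak$^*$-continuity of $r \mapsto \int_U \lambda_{ij}(t,u)r_t(du)$ at each $t$ inherited from (A3).
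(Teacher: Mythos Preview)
Your proposal is correct and follows the same route as the paper: both rely on the equivalence \eqref{equi} and then invoke standard discounted SMDP dynamic programming, with the paper simply citing \cite{ross1992applied} while you supply the contraction-mapping and verification details explicitly. One small imprecision: convergence in $\mathcal{R}$ does \emph{not} give pointwise-in-$t$ continuity of $r\mapsto\int_U\lambda_{ij}(t,u)r_t(du)$, but the integrated quantity you actually need is continuous directly from the definition of the weak$^*$ topology (test against the $L^1([0,\infty);C(U))$ function $(t,u)\mapsto e^{-\alpha t}e^{-\int_0^t\tilde g_i^r}\lambda_{ij}(t,u)$ and split as in the proof of Lemma~3.1), so the argument goes through.
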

\begin{rem} The reason for restricting to only stationary controls is evident from our approach.
For setting a bijection between the set of controls of the
original control problem and the equivalent SMDP, we need the
restriction on the set of admissible controls. For a Markov
control it is not clear that such a bijection can be established.
Because in CTMDP as well as in SMDP, the optimal control is
finally given by a stationary control, this restriction is not
unnatural.
\end{rem}
\section{Infinite Horizon Average Cost}
Now we investigate the infinite horizon average cost cost problem via the equivalent semi-Markov decision process approach.
First we describe the infinite horizon average cost control problem for the original control problem.
For $\textbf{u} \in \mathcal{U}$ define
$$J^{\textbf{u}}(i)=\displaystyle \limsup_{n \rightarrow \infty}\frac{\mathbb{E}_{i,0}^{\textbf{u}}
\int_0^{T_n} \int_Uc(X_t,Y_t,u)\textbf{u}(X_t,Y_t)(du)dt}{\mathbb{E}_{i,0}^{\textbf{u}}T_n}\,,$$
where $T_n$ is the $n$th jump time.
The aim of the controller is to minimise $J^{\textbf{u}}$ over all $\textbf{u}.$

\noindent Now consider the semi-Markov decision process defined in
the previous section with the expected one-stage (jump to jump)
cost in state $i$ given by
\begin{eqnarray*}\varphi(i,r)=\int_0^{\infty}\biggl(\exp\bigl\{- \int_0^y\int_U\sum_{k \neq i}
\lambda_{ik}(s,u)r_s(du)ds\bigr\}\int_Uc(i,y,u)r_y(du)\biggr)dy\,.
\end{eqnarray*}where $r \in \mathcal{R}$ is the action chosen in state $i$.

\noindent Now define
$$\tilde{J}^{\pi}(i)=\displaystyle \limsup_{n \rightarrow \infty}\frac{\mathbb{E}^{\pi}_iZ(T_n)}{\mathbb{E}_i^{\pi}(T_n)}\, ,$$
where $$Z(T_n)=\displaystyle \sum_{k=0}^{n-1}\int_{0}^{\tau_{k+1}}\int_{U}c(X_{T_k},y,u)\pi_{X_{T_k}}^{}(y)(du)dy$$ is the cost incurred up to the $n$th jump time.

By arguments analogous to the discounted case we have
$$\inf_{\textbf{u}}J^{\textbf{u}}(i)=\inf_{\pi}\tilde{J}^{\pi}(i).$$
Let $\bar{\tau}(i,r)$ be the expected sojourn time of the
equivalent semi-Markov decision process in state i, when the
action chosen is $r$. Thus
\begin{eqnarray*}\bar{\tau}(i,r)=\int_0^{\infty}\exp\{-\int_0^t\int_U\displaystyle \sum_{j \neq i} \lambda_{ij}(y,u)r_y(du)dy\}dt\, .\end{eqnarray*}
Consider the equation
\begin{eqnarray}\label{smdp}\psi(i)=\inf_{r \in \mathcal{R}}[\varphi(i,r)+\sum_{j \neq i}\hat{p}_{ij}(r)\psi(j)-\rho \bar{\tau}(i,r)]
\end{eqnarray}
where $\psi:S \rightarrow \mathbb{R}$ and $\rho$ is a scalar.

Using the equivalence and the theory of SMDP
{\cite{ross1992applied}}, we obtain the following result:
\begin{thm}If $\eqref{smdp}$ has a solution $(h,g)$, where $h$ is a bounded
function,
then $g$ is the optimal average cost for the original control
problem and an optimal policy is given by
$\textbf{u}^*(i,y)=r_i^*(y)$ where $r_i^*$ is given by
{\small\begin{eqnarray*} [\varphi(i,r^*_i)+\sum_{j \neq
i}\hat{p}_{ij}(r^*_i)h(j)-g \bar{\tau}(i,r^*_i)]=\inf_{r \in
\mathcal{R}}[\varphi(i,r)+\sum_{j \neq
i}\hat{p}_{ij}(r)h(j)-g\bar{\tau}(i,r)]\,.
\end{eqnarray*}}
\end{thm}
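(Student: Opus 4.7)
The approach is to leverage the equivalence between the original control problem and the constructed SMDP already established in the excerpt, and then appeal to the standard ACOE verification argument for semi-Markov decision processes. Since $\inf_{\textbf{u}} J^{\textbf{u}}(i) = \inf_{\pi} \tilde{J}^{\pi}(i)$ is noted just before the theorem, it suffices to show that $g$ is the optimal average cost for the SMDP and that the selector $r^*$ yields an optimal stationary SMDP policy; the corresponding optimal control for the original problem is then $\textbf{u}^*(i,y) = r^*_i(y)$.

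Before the main argument I would record that the minimizer $r^*_i$ exists and can be chosen measurably in $i$. The maps $r \mapsto \varphi(i,r)$, $r \mapsto \hat{p}_{ij}(r)$ and $r \mapsto \bar{\tau}(i,r)$ are all continuous on the compact metric space $\mathcal{R}$: the first two by the same estimate as in Lemma 3.1 (taking $\alpha = 0$ in the bound on $\varphi$), and the third by the same argument applied to the sojourn-time integrand. A standard measurable selection theorem (Kuratowski--Ryll-Nardzewski) then produces the measurable selector $i \mapsto r^*_i$.

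The heart of the proof is the classical ACOE telescoping step. From \eqref{smdp}, for every stationary policy $\pi$,
\[
h(i) \leq \varphi(i,\pi_i) + \sum_{j \neq i} \hat{p}_{ij}(\pi_i) h(j) - g\,\bar{\tau}(i,\pi_i),
\]
with equality when $\pi_i = r^*_i$. Applying this at $i = X_{T_k}$, using the strong Markov property of the embedded SMDP at $T_k$ to rewrite $\sum_j \hat{p}_{X_{T_k},j}(\pi_{X_{T_k}}) h(j)$ as $\mathbb{E}^{\pi}_i[h(X_{T_{k+1}}) \mid H_k]$, taking $\mathbb{E}^{\pi}_i$ and summing $k = 0, \ldots, n-1$ causes the $h$-terms to telescope, yielding
\[
h(i) \leq \mathbb{E}^{\pi}_i Z(T_n) - g\,\mathbb{E}^{\pi}_i T_n + \mathbb{E}^{\pi}_i h(X_{T_n}),
\]
where I have used the identity $\sum_{k=0}^{n-1}\mathbb{E}^{\pi}_i \bar{\tau}(X_{T_k}, \pi_{X_{T_k}}) = \mathbb{E}^{\pi}_i T_n$. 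Dividing by $\mathbb{E}^{\pi}_i T_n$ and taking $\limsup$: assumption (A1) gives $\bar{\tau}(i,r) \geq 1/M$ and hence $\mathbb{E}^{\pi}_i T_n \geq n/M \to \infty$, so together with the boundedness of $h$ the boundary terms $h(i)/\mathbb{E}^{\pi}_i T_n$ and $\mathbb{E}^{\pi}_i h(X_{T_n})/\mathbb{E}^{\pi}_i T_n$ vanish, giving $g \leq \tilde{J}^{\pi}(i)$. Applying the same computation with $\pi^*_i = r^*_i$ replaces every inequality by equality and produces $g = \tilde{J}^{\pi^*}(i)$, establishing optimality.

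The main obstacle is the rigorous justification of the telescoping step: it rests on the strong Markov property of the embedded SMDP at the jump times $T_k$ and on integrability sufficient to interchange summation and expectation, both of which follow from boundedness of $h$ and $c$ and from (A1) and (A4). The one quantitative subtlety is ensuring $\mathbb{E}^{\pi}_i T_n \to \infty$, which (A1) supplies via the uniform lower bound $1/M$ on per-step expected sojourn times; without this lower bound the boundary terms need not be negligible and the inequality $g \leq \tilde{J}^{\pi}(i)$ would not follow from \eqref{smdp} alone.
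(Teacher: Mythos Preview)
Your proposal is correct and follows exactly the route the paper takes: the paper does not give a self-contained argument but simply invokes the established equivalence with the SMDP and cites the standard ACOE verification theorem from Ross \cite{ross1992applied}, which is precisely the telescoping computation you have written out. You have just made explicit (including the existence of the measurable selector and the use of (A1) to force $\mathbb{E}^{\pi}_i T_n \to \infty$) what the paper leaves to the reference.
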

Now we give conditions explicit conditions on $\lambda_{ij}$ which
will ensure the existence of a bounded solution of $\eqref{smdp}$.
We make two additional assumptions:

\noindent \textbf{(A5)}\,\,\,\, $S$ is a finite set.

\noindent \textbf{(A6)}\,\,\,\, The exists $\delta>0$ such that $\lambda_{i0}(y,u)> \delta$ for all
$i(\neq 0), y , u$ and for $j\neq 0$ if $\displaystyle \sup_{y,u}\lambda_{ij}(y,u)>0$, then $\displaystyle \inf_{y,u}\lambda_{ij}(y,u)>0$.
\begin{rem} Note that even though $S$ is finite, the effective state space is $S \times \mathbb{R}_+$ which is uncountable.
\end{rem}

\noindent Now we give an example where our assumptions are true.

\begin{ex} We modify the second example in the introduction. Let
$\lambda_{ij}$ be modified as follows:

\noindent For $i \leq N-3$,
\begin{eqnarray*}\lambda_{iN}(y,\mu)=\frac{\mu}{10^{N-i}}\,.
\end{eqnarray*}
\begin{eqnarray*}\lambda_{ii+1}(y,\mu)=\begin{cases}\mu-\frac{2\mu}{10^{N-i}}\quad \mbox{for}\;y\leq 100 \\
\frac{\mu}{10^{N-i}}\quad \mbox{for}\;y\geq 1000 \\
\mbox{linear\, in\, between}\,.
\end{cases}
\end{eqnarray*}
\begin{eqnarray*}\lambda_{ii+2}(y,\mu)=\mu-\frac{\mu}{10^{N-i}}-\lambda_{ii+1}(y,\mu)\,.
\end{eqnarray*}
\begin{eqnarray*}\lambda_{N-2N-1}(y,\mu)=\begin{cases}\mu-\frac{2\mu}{10^{2}}\quad \mbox{for}\;y\leq 100 \\
\frac{2\mu}{10^{2}}\quad \mbox{for}\;y\geq 1000 \\
\mbox{linear\, in\, between}\,.
\end{cases}
\end{eqnarray*}
\begin{eqnarray*}\lambda_{N-2N}(y,\mu)=\mu-\lambda_{N-2N-1}(y,\mu)\,.\end{eqnarray*}
\begin{eqnarray*}\lambda_{N-1N}(y,\mu)=\mu\,.\end{eqnarray*}
\begin{eqnarray*}\lambda_{N0}(y,\mu)=\mu\,.\end{eqnarray*}
Clearly this example satisfies (A5) and (A6) with $N$ playing the role of $0$.
\end{ex}

\noindent For $\textbf{u} \in \mathcal{U}$ it follows from (A6) that the transition probabilities of the embedded Markov
chain $\{X_{T_n}\}$ where $T_n$ are the successive jump times, satisfy:
\begin{align*}\hat{p}_{i0}^{\textbf{u}}&= \int_0^{\infty}\tilde{\lambda}_{i0}(y, \textbf{u}(i,y))
\exp\bigl(-\int_0^y\displaystyle \sum_{j \neq i}\tilde{\lambda}_{ij}(s,\textbf{u}(i,s))ds\bigr)dy \\
& \geq \delta \int_0^{\infty}\exp(-My)dy \\
&= \frac{\delta}{M}\;.
\end{align*}
This implies that in the embedded Markov chain, the expected
number of steps taken to reach $0$ starting from any state $i$ is
finite, i.e., if
$$N=\min\{n\geq 1|X_{T_n}=0\}$$ then
\begin{eqnarray}\label{frt}\displaystyle \sup_{\textbf{u}\in \mathcal{U}}\mathbb{E}_i^{\textbf{u}}N <\infty \,. \end{eqnarray}
Also by (A6) it follows that if $\hat{p}^{\textbf{u}}_{ij}\neq 0$
then $\displaystyle \inf_{\textbf{u}\in
\mathcal{U}}\hat{p}^{\textbf{u}}_{ij}>0$.

Let
\begin{eqnarray}\label{firstzero}\tau_0=\inf\{t>0|(X_t,Y_t)=(0,0)\}\,.\end{eqnarray}
\begin{lem}Under (A1)-(A3), (A5)-(A6) we have
\begin{eqnarray}\displaystyle\sup_{\textup{\textbf{u}}}\mathbb{E}^{\textup{\textbf{u}}}_{i,0}\tau_0 < \infty ,\end{eqnarray}
where $\tau_0$ is as in $\eqref{firstzero}$.
\end{lem}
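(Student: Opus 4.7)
The plan is to represent $\tau_0$ as a random sum of sojourn times along the embedded chain $\{X_{T_n}\}$, bound each conditional sojourn mean uniformly via (A6), and then apply the uniform bound \eqref{frt} on the expected number of jumps needed to hit state $0$.

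First suppose $i \neq 0$. Since the age $Y_t$ resets to $0$ precisely at each jump time, the first time $(X_t, Y_t) = (0,0)$ coincides with $T_N$, where $N = \min\{n \geq 1 : X_{T_n} = 0\}$ is exactly the stopping time appearing in \eqref{frt}. Setting $\tau_k := T_k - T_{k-1}$, this yields $\tau_0 = T_N = \sum_{k=1}^{N}\tau_k$.

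The key estimate is a uniform upper bound on the conditional mean sojourn time in any non-zero state. By (A6), for every $i \neq 0$, $y \geq 0$ and $u \in U$,
$$-\lambda_{ii}(y,u) = \sum_{j\neq i}\lambda_{ij}(y,u) \geq \lambda_{i0}(y,u) > \delta.$$
Given $H_{k-1}$, the conditional survival function of $\tau_k$ is the exponential of minus the integrated total jump rate out of $X_{T_{k-1}}$; on $\{X_{T_{k-1}} \neq 0\}$ this is dominated by $e^{-\delta t}$, so $\mathbb{E}^{\textbf{u}}[\tau_k \mid H_{k-1}] \leq 1/\delta$ on that event. Because $\{N \geq k\} = \{X_{T_\ell} \neq 0 \text{ for } 0 \leq \ell \leq k-1\}$ is $H_{k-1}$-measurable and forces $X_{T_{k-1}} \neq 0$, the tower property together with monotone convergence yields
$$\mathbb{E}^{\textbf{u}}_{i,0}\tau_0 = \sum_{k \geq 1}\mathbb{E}^{\textbf{u}}_{i,0}\bigl[\mathbf{1}_{\{N \geq k\}}\,\mathbb{E}^{\textbf{u}}[\tau_k \mid H_{k-1}]\bigr] \leq \frac{1}{\delta}\sum_{k \geq 1}\mathbb{P}^{\textbf{u}}_{i,0}(N \geq k) = \frac{\mathbb{E}^{\textbf{u}}_{i,0}N}{\delta}.$$
Taking $\sup_{\textbf{u}}$ and invoking \eqref{frt} settles the case $i \neq 0$. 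The case $i = 0$ reduces to the above by applying the strong Markov property at $T_1$, noting $X_{T_1} \neq 0$ almost surely.

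The only delicate point is the Wald-type interchange in the last display, which becomes routine because the conditional bound sits on the $H_{k-1}$-measurable event $\{N \geq k\}$ and the summands are non-negative. The main obstacle, if one arises, is controlling the initial sojourn in state $0$ for the case $i = 0$; with $S$ finite by (A5) this should follow by the same exponential comparison argument once a positive lower bound on the total jump rate out of $0$ is extracted from the standing hypotheses on the model.
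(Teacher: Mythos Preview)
Your argument is correct and follows the same high-level strategy as the paper: write $\tau_0$ as a sum of sojourn times along the embedded chain and bound it by a uniform sojourn estimate times $\mathbb{E}^{\textbf{u}}_i N$, then invoke \eqref{frt}. The implementation, however, is genuinely cleaner than the paper's. The paper conditions on the \emph{next} state and works with $\eta^{\textbf{u}}_{jk}$, the expected sojourn in $j$ given that the next jump is to $k$; bounding $\sup_{\textbf{u}}\max_{j,k}\eta^{\textbf{u}}_{jk}$ then requires the second clause of (A6) (to keep $\hat p^{\textbf{u}}_{jk}$ away from zero) and the finiteness of $S$ in (A5), which the paper explicitly flags as ``crucial''. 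You instead bound the \emph{unconditional} sojourn time directly via the survival-function comparison $\exp\bigl(-\int_0^t\sum_{j\neq i}\tilde\lambda_{ij}\bigr)\le e^{-\delta t}$, which needs only the first clause of (A6) and in fact dispenses with (A5) for this step. One small tightening: for the case $i=0$ you need a lower bound on the total exit rate from state $0$, and this is exactly (A2), which is already among your hypotheses; invoking it explicitly (giving sojourn $\le 1/m$) would close that paragraph cleanly rather than leaving it as ``should follow''. Indeed, (A2) alone gives the uniform sojourn bound $1/m$ for \emph{every} state, so you could streamline further by using it throughout in place of the $1/\delta$ bound.
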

\begin{proof} Let $\delta_n$ denote the set of sequences of states $(i_0,i_1, \cdots,i_n)$ such that
$$i_0=i$$
$$i_j \neq 0 \quad \mbox{for}\quad j=1,2, \cdots, n-1$$
$$i_n=0.$$
Then \begin{eqnarray*}\mathbb{E}^{\textbf{u}}_{i,0}\tau_0= \sum_{n=1}^{\infty} \sum_{(i_0,i_1,
\cdots,i_n)\in \delta_n}\prod \hat{p}^{\textbf{u}}_{i_k, i_{k+1}}(\eta_{i_0i_1}^{\textbf{u}}+ \cdots + \eta_{i_{n-1}i_n}^{\textbf{u}})
\end{eqnarray*}
where $\eta_{ij}^{\textbf{u}}$ is the expected amount of time spent in state $i$ given that the next transition will be into state $j$.
Therefore \begin{eqnarray*}\mathbb{E}^{\textbf{u}}_{i,0}\tau_0 \leq (\displaystyle\max_{j,k \in S}\eta_{jk}^{\textbf{u}})\mathbb{E}_i^{\textbf{u}}N\,.
\end{eqnarray*}
Using (A6) and the fact that the expected sojourn times in each state is finite it follows that
$$\displaystyle \sup_{\textbf{u}\in \mathcal{U}}(\displaystyle\max_{j,k \in S}\eta_{jk}^{\textbf{u}})< \infty .$$

\noindent Note that for the above the finiteness of the state
space is crucial. Hence the desired result follows by
$\eqref{frt}$.
\end{proof}

\begin{lem}\label{ubounded}For $\alpha > 0$, let $h_{\alpha}(i) = V_{\alpha}(i) - V_{\alpha}(0)$.
Then the family $\{h_{\alpha}\}_{\alpha>0}$ is uniformly bounded.
\end{lem}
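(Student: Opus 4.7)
The plan is to leverage the previous lemma, which supplies $M_i := \sup_{\textbf{u}\in\mathcal{U}}\mathbb{E}^{\textbf{u}}_{i,0}\tau_0 < \infty$ under (A5)--(A6), in order to bound $V_\alpha(i)-V_\alpha(0)$ from above and below separately. The key device in each direction is the strong Markov property of $\{(X_t,Y_t)\}$ at $\tau_0$: at that instant the process sits at $(0,0)$ and therefore starts afresh from the distinguished state under any stationary control.

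For the upper bound I would fix $\epsilon>0$ and pick $\textbf{u}^{\epsilon}\in\mathcal{U}$ that is $\epsilon$-optimal for initial state $(0,0)$, i.e.\ $J^{\textbf{u}^{\epsilon}}_\alpha(0)\leq V_\alpha(0)+\epsilon$. Applying this \emph{same} stationary control from $(i,0)$ and splitting the cost integral at $\tau_0$ via the strong Markov property,
\begin{align*}
V_\alpha(i) \leq J^{\textbf{u}^{\epsilon}}_\alpha(i) &= \mathbb{E}^{\textbf{u}^{\epsilon}}_{i,0}\Bigl[\int_0^{\tau_0}e^{-\alpha t}\tilde{c}(X_t,Y_t,\textbf{u}^{\epsilon}(X_t,Y_t))\,dt\Bigr] + \mathbb{E}^{\textbf{u}^{\epsilon}}_{i,0}\bigl[e^{-\alpha \tau_0}\bigr]\,J^{\textbf{u}^{\epsilon}}_\alpha(0)\\
&\leq \tilde{C}\,M_i + V_\alpha(0)+\epsilon,
\end{align*}
using $\tilde{c}\leq \tilde{C}$ and $\mathbb{E}[e^{-\alpha \tau_0}]\leq 1$. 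Letting $\epsilon\downarrow 0$ delivers $V_\alpha(i)-V_\alpha(0)\leq \tilde{C}M_i$.

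For the lower bound I would take an arbitrary $\textbf{u}\in\mathcal{U}$ and apply the same strong Markov decomposition from $(i,0)$, now discarding the non-negative pre-$\tau_0$ cost, to get $J^{\textbf{u}}_\alpha(i)\geq \mathbb{E}^{\textbf{u}}_{i,0}[e^{-\alpha \tau_0}]\,J^{\textbf{u}}_\alpha(0)\geq \mathbb{E}^{\textbf{u}}_{i,0}[e^{-\alpha\tau_0}]\,V_\alpha(0)$. Combining with the elementary inequality $e^{-\alpha\tau_0}\geq 1-\alpha\tau_0$ (valid for all real arguments) and $V_\alpha(0)\geq 0$ gives $J^{\textbf{u}}_\alpha(i)\geq (1-\alpha\mathbb{E}^{\textbf{u}}\tau_0)V_\alpha(0)\geq (1-\alpha M_i)V_\alpha(0)$. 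Taking the infimum over $\textbf{u}$ yields $V_\alpha(i)\geq (1-\alpha M_i)V_\alpha(0)$, so
\[
V_\alpha(0)-V_\alpha(i)\;\leq\; \alpha M_i\, V_\alpha(0) \;\leq\; \tilde{C}M_i,
\]
after invoking the universal bound $V_\alpha(0)\leq \tilde{C}/\alpha$. The cancellation of the factor $\alpha$ against the $1/\alpha$ in $V_\alpha(0)$ is the crux of the argument and the step I expect to matter most; without it the naive estimate would blow up as $\alpha\downarrow 0$.

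Combining the two directions, $|h_\alpha(i)|\leq \tilde{C}M_i$. Since $S$ is finite by (A5), $\max_{i\in S}M_i<\infty$, yielding the desired uniform bound $\sup_{\alpha>0,\,i\in S}|h_\alpha(i)|<\infty$.
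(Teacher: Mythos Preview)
Your proof is correct and follows essentially the same route as the paper: split the cost at $\tau_0$, use the strong Markov property, bound the pre-$\tau_0$ contribution by $\tilde C\,\mathbb{E}\tau_0$, and for the reverse direction control $1-\mathbb{E}[e^{-\alpha\tau_0}]$ by $\alpha\,\mathbb{E}\tau_0$ so that the factor $\alpha$ cancels against $V_\alpha(0)\le \tilde C/\alpha$. The only cosmetic differences are that the paper works directly with the optimal control $\textbf{u}_\alpha^*$ from Theorem~3.1 rather than an $\epsilon$-optimal one, and passes through Jensen's inequality ($\mathbb{E}[e^{-\alpha\tau_0}]\ge e^{-\alpha K}$) before applying $1-e^{-x}\le x$, whereas you apply $e^{-x}\ge 1-x$ pointwise and take expectations; both yield the same bound $|h_\alpha(i)|\le \tilde C\,K$.
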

\begin{proof}Let $K$ be a constant such that $\displaystyle \max_i \sup_{\textup{\textbf{u}}}\mathbb{E}^{\textup{\textbf{u}}}_{i,0}\tau_0 < K$. If $\textbf{u}_{\alpha}^*$ denotes the
optimal policy for the $\alpha-$discounted case then we have,
\begin{align*}V_{\alpha}(i)=&\mathbb{E}_{i,0}^{\textbf{u}_{\alpha}^*} \biggl[\int_0^{\tau_0}
e^{-\alpha t}\tilde{c}(X_t,Y_t, \textbf{u}^*_{\alpha}(X_t,Y_t))dt \\&+ \int_{\tau_0}^{\infty}e^{-\alpha t}\tilde{c}(X_t,Y_t, \textbf{u}^*_{\alpha}(X_t,Y_t))dt\biggr]\\
&\leq \tilde{C}K+ \mathbb{E}_{i,0}^{\textbf{u}_{\alpha}^*}e^{-\alpha \tau_0}V_{\alpha}(0)\\
&\leq \tilde{C}K+V_{\alpha}(0)\;.
\end{align*}Again,
$$\mathbb{E}_{i,0}^{\textbf{u}_{\alpha}^*}e^{-\alpha \tau_0}V_{\alpha}(0) \leq V_{\alpha}(i)$$
Thus,
\begin{align*}V_{\alpha}(0) & \leq V_{\alpha}(i)+\bigl(1-\mathbb{E}_{i,0}^{\textbf{u}_{\alpha}^*}e^{-\alpha \tau_0}\bigr)V_{\alpha}(0) \\
&\leq V_{\alpha}(i)+\bigl(1-e^{-\alpha K}\bigr)\frac{\tilde{C}}{\alpha}\\
& \leq V_{\alpha}(i) + K\tilde{C}\;.
\end{align*}The second inequality follows from Jensen's inequality.

\noindent Thus we have $$|h_{\alpha}(i)|\leq K\tilde{C}\,.$$
\end{proof}

\begin{thm}Under (A1)-(A6), the equation $\eqref{smdp}$ has a solution $(h,g)$ where $h$ is a bounded function and $g$ is a scalar.
\end{thm}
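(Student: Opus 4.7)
The plan is to run a vanishing discount argument starting from the discounted Bellman equation \eqref{esm} of Theorem~3.1. Write $h_\alpha(i)=V_\alpha(i)-V_\alpha(0)$ and $g_\alpha:=\alpha V_\alpha(0)$. By Lemma~\ref{ubounded} we have $|h_\alpha(i)|\le K\tilde C$ uniformly in $\alpha$, and by the a priori bound $V_\alpha(0)\le\tilde C/\alpha$ we have $|g_\alpha|\le\tilde C$. Since $S$ is finite by (A5), a diagonal extraction gives a sequence $\alpha_n\downarrow 0$ such that $h_{\alpha_n}(i)\to h(i)$ for every $i\in S$ and $g_{\alpha_n}\to g$, for some bounded $h:S\to\mathbb{R}$ and scalar $g$.

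Next I would rewrite \eqref{esm} in terms of $h_\alpha$ and $g_\alpha$. Using $\sum_{j\neq i}\hat{p}_{ij}(r)=1$ and substituting $V_\alpha(j)=V_\alpha(0)+h_\alpha(j)$ into \eqref{esm} yields
\begin{align*}
h_\alpha(i)=\min_{r\in\mathcal{R}}\Big[f(i,r)-V_\alpha(0)\,\Delta_\alpha(i,r)+\sum_{j\neq i}\hat{p}_{ij}(r)\,h_\alpha(j)\int_0^\infty e^{-\alpha t}\,dF^r_{ij}(t)\Big],
\end{align*}
where $\Delta_\alpha(i,r)=\sum_{j\neq i}\hat{p}_{ij}(r)\int_0^\infty(1-e^{-\alpha t})\,dF^r_{ij}(t)$. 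I would then verify, uniformly in $r\in\mathcal{R}$: (i) $f(i,r)\to\varphi(i,r)$ by dominated convergence, using (A2) to dominate the inner exponential by $e^{-my}$; (ii) $\int_0^\infty e^{-\alpha t}\,dF^r_{ij}(t)\to 1$, so the last sum tends to $\sum_{j\neq i}\hat{p}_{ij}(r)h(j)$; (iii) $\Delta_\alpha(i,r)/\alpha\to\bar\tau(i,r)$, so that $V_\alpha(0)\,\Delta_\alpha(i,r)=g_\alpha\cdot\Delta_\alpha(i,r)/\alpha\to g\,\bar\tau(i,r)$.

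Finally, to exchange $\lim_n$ with $\min_{r\in\mathcal{R}}$, I would pick a minimizer $r_n^{(i)}$ of the right-hand side for each $\alpha_n$ (existing by Lemma~3.1 and compactness of $\mathcal{R}$), extract a further subsequence with $r_n^{(i)}\to r^*_i$, and conclude via the continuity of $f,\hat{p}_{ij},\bar\tau$ on $\mathcal{R}$ together with the uniform-in-$r$ limits above that $(h,g)$ satisfies \eqref{smdp}, with $r^*_i$ as minimizer at state $i$. The main obstacle is establishing the uniformity in $r$ needed in step (iii): $V_\alpha(0)$ blows up like $1/\alpha$ while $\Delta_\alpha(i,r)$ vanishes like $\alpha$, so the product must be controlled uniformly over the compact action set $\mathcal{R}$ for the interchange of $\min$ and $\lim$ to go through; finiteness of $S$ together with the uniform lower bound on the total jump rate from (A2) are precisely what supply this uniformity.
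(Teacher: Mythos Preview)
Your proposal is correct and follows essentially the same vanishing discount route as the paper: the paper's own proof simply extracts a subsequence along which $\alpha_n\tilde V_{\alpha_n}(0)\to g$ and $\tilde h_{\alpha_n}\to h$ (using Lemma~\ref{ubounded} and \eqref{equi}) and then defers the passage to the limit in \eqref{esm} to ``standard arguments'' from \cite{ross1992applied}; you have just written those standard arguments out in detail. One minor point: in your final step the continuity you need on $\mathcal{R}$ is that of $\varphi(i,\cdot)$ and $\bar\tau(i,\cdot)$, which are not covered by Lemma~3.1 as stated (it handles $f$, $\hat p_{ij}$, $F^{(\cdot)}_{ij}(t_0)$), but they follow by the identical dominated--convergence argument using (A2).
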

\begin{proof}Let $\tilde{h}_{\alpha}(i)=\tilde{V}_{\alpha}(i)-\tilde{V}_{\alpha}(0)$. Then by Lemma $\ref{ubounded}$ and
 $\eqref{equi}$, it follows that the family
$\{\tilde{h}_{\alpha}(i)\}$ is uniformly bounded. Therefore there
exists a sequence $\alpha_n \rightarrow 0$ such that
$$g=\displaystyle\lim_{\alpha_n \rightarrow 0}\alpha _n \tilde{V}_{\alpha_n}(0)$$
$$h(i)=\displaystyle\lim_{\alpha_n \rightarrow 0}\tilde{h}_{\alpha_n}(i)$$ where $h$ is a bounded function.
Now using standard arguments \cite{ross1992applied}, it can be
shown that the pair $(g,h)$ satisfies $\eqref{smdp}$.
\end{proof}
\begin{rem}If $\{X_t^{\textbf{u}}\}$ is irreducible for each $\textbf{u} \in \mathcal{U}$, i.e., if the embedded Markov chain is irreducible then
$$\displaystyle \limsup_{n \rightarrow \infty}\frac{\mathbb{E}_{i,0}^{\textbf{u}}\int_0^{T_n}
\int_U c(X_t,Y_t,u)\textbf{u}(X_t,Y_t)(du)dt}{\mathbb{E}_{i,0}^{\textbf{u}}T_n}=
\displaystyle \limsup_{T \rightarrow \infty}\frac{1}{T}\mathbb{E}_{i,0}^{\textbf{u}}\int_0^T \int_Uc(X_t,Y_t,u)\textbf{u}(X_t,Y_t)(du)dt\,.$$
Thus if the irreducibility assumption holds, then $g$ of the above theorem satisfies
$$g=\displaystyle \limsup_{T \rightarrow \infty}\frac{1}{T}\mathbb{E}_{i,0}^{\textbf{u}}\int_0^T \int_Uc(X_t,Y_t,u)\textbf{u}(X_t,Y_t)(du)dt\,.$$
\end{rem}

\section{Conclusions}
We have studied  optimal control problems for a class Markov
processes with age dependent transitions rates which subsumes
semi-Markov decision processes with the holding time distributions
having densities. We have allowed control actions between jumps
based on the age of the process. We have constructed an equivalent
SMDP which yields the relevant results for the original problem. A
standard approach towards solving an optimal control problem  is
via the HJB equation. In our problem the HJB equation for the
discounted cost case is given by
\begin{eqnarray}\label{hjb}\frac{d\varphi(i,y)}{dy}+ \inf_{u}\large[c(i,y,u)+ \sum_{j \neq i}\lambda_{ij}(y,u)
\large\{\varphi(j,0)-\varphi(i,y)\}\large]=\alpha
\varphi(i,y)\end{eqnarray}on $S \times [0,\infty)$. One important
difficulty in handing with this differential equation is that it
is non-local. It can be be shown via contraction principle
argument that when $\alpha > 2M$, the value function $V_{\alpha}$
is the unique bounded, smooth solution of \eqref{hjb}. In this
case the infimum in \eqref{hjb} is realised at a stationary
deterministic (non-relaxed) control which is optimal for the
$\alpha$-discounted cost criteria. But we have not been able to
establish the existence of a solution to \eqref{hjb} when $\alpha
\leq 2M$. Because we have not been able to solve the discounted
case HJB for smaller values of $\alpha$, we could not pursue the
vanishing discount approach in finding a solution to the HJB
equation for the average optimal case. In our problem the HJB
equation for the average optimal case is given by
\begin{eqnarray}\label{aohjb}
\rho= \frac{dh(i,y)}{dy}+\inf_{u}\large[c(i,y,u)+ \sum_{j \neq
i}\lambda_{ij}(y,u)\large\{h(j,0)-h(i,y)\}\large]\,.\end{eqnarray}It
would be interesting to investigate an appropriate solution of
\ref{aohjb} to study the average optimal case.

Finally, in this paper we have assumed that the jump rates and the
cost function are bounded. If the jump rates are unbounded but
satisfy a certain growth rate, then following the arguments in
Chapter $8$, Section $3$ in \cite{Ethier}, one can show that the
controlled martingale problem for the operator
\begin{eqnarray}\mathcal{A}^{\textbf{u}}f(i,y)= \frac{\partial f}{\partial y}(i,y)+
 \sum_{j \neq i}\tilde{\lambda}_{ij}(t,y,\textbf{u}(i,y))[f(j,0)-f(i,y)] \;.
\end{eqnarray} is well-posed. For an unbounded cost, with an appropriate growth rate it may be possible to work in the space of
continuous functions with weighted norms as in \cite{hgbook},
\cite{hernandez} to derive analogous results.

\end{document}